\documentclass[twoside,a4paper,reqno,11pt]{amsart} 
\usepackage[top=30mm,right=30mm,bottom=30mm,left=30mm]{geometry}
\setlength{\parskip}{1mm}

\usepackage{amsmath, amssymb, amsthm, amsfonts}
\usepackage{graphicx, float, array, multicol, rotating}
\usepackage{mathtools,mathdots,wasysym,latexsym,stmaryrd,gensymb}

\usepackage{color}

\renewcommand{\leq}{\leqslant}
\newcommand{\leqn}{\trianglelefteqslant}
\renewcommand{\geq}{\geqslant}

\newcommand{\sym}{\mathrm{Sym}}
\newcommand{\alt}{\mathrm{Alt}}
\newcommand{\GL}{\mathrm{GL}}
\newcommand{\SL}{\mathrm{SL}}
\newcommand{\GammaL}{\Gamma\mathrm{L}}
\newcommand{\PSL}{\mathrm{PSL}}
\newcommand{\PGL}{\mathrm{PGL}}
\newcommand{\PGammaL}{\mathrm{P}\Gamma\mathrm{L}}
\newcommand{\nonsplit}[2]{#1\raisebox{0.6ex}{$\cdot$} #2}

\usepackage[shortlabels]{enumitem}
\setlist[enumerate]{label=\rm{(\roman*)}}

\usepackage[justification=centering,tableposition=top,figureposition=bottom]{caption}
\usepackage{ltablex,hhline,booktabs}


\theoremstyle{definition}
\newtheorem{definition}{Definition}[section]
\newtheorem{shdefinition}{Definition}

\theoremstyle{plain}

\newtheorem{shtheorem}[shdefinition]{Theorem}
\newtheorem{lemma}[definition]{Lemma}

\newtheorem{shcorollary}[shdefinition]{Corollary}

\begin{document}
\title[The distinguishing number of quasiprimitive and semiprimitive groups]{The distinguishing number of quasiprimitive \\ and semiprimitive groups}

\author{Alice Devillers, Luke Morgan, Scott Harper}

\address{Alice Devillers, Centre for the Mathematics of Symmetry and Computation, Department of Mathematics and Statistics (M019), The University of Western Australia, 35 Stirling Highway,  Crawley, 6009, Western Australia}
\email{alice.devillers@uwa.edu.au}

\address{Scott Harper, School of Mathematics, University of Bristol, Bristol, BS8 1TW, UK}
\email{scott.harper@bristol.ac.uk}
 
 \address{Luke Morgan,   University of Primorska, UP IAM, Muzejski trg 2,  6000 Koper, Slovenia.
University of Primorska, UP FAMNIT, Glagolja\v{s}ka 8,  6000 Koper,    }
 \email{luke.morgan@famnit.upr.si}

\thanks{\emph{Acknowledgements}: The authors thank Gabriel Verret for highlighting this problem and the Centre for the Mathematics of Symmetry and Computation, where this work began at the 2018 Research Retreat. The second author is grateful for the Cecil King Travel Scholarship from the LMS and the hospitality of the University of Western Australia; he also thanks EPSRC and the Heilbronn Institute for Mathematical Research for their financial support. The third author gratefully acknowledges the support of the ARC grant DE160100081.}
 

\date{\today}

\begin{abstract}
The distinguishing number of  $G \leq \sym(\Omega)$ is the smallest size of a partition of $\Omega$ such that only the identity of $G$ fixes all the parts of the partition. Extending earlier results of Cameron, Neumann, Saxl and Seress on the distinguishing number of finite primitive  groups, we show that all imprimitive quasiprimitive groups have distinguishing number two, and all non-quasiprimitive semiprimitive groups have distinguishing number two, except for $\mathrm{GL}(2,3)$ acting on the eight non-zero vectors of $\mathbb{F}_3^2$, which has distinguishing number three.
\end{abstract}

\maketitle

\section{Introduction}\label{s:intro}

Let $\Omega$ be a finite set and let $\sym(\Omega)$ be the symmetric and alternating groups on $\Omega$. The \emph{distinguishing number} of a permutation group $G \leq \sym(\Omega)$, denoted $D(G)$, is the least positive integer $k$ such that there exists a partition $\Pi$ of $\Omega$ into $k$ parts such that only the identity of $G$ stabilises each part.

It is straightforward to see that $D(G) = 1$ if and only if $G = 1$. In addition, $D(G)=2$ if and only if there is a partition $\Pi$ of $\Omega$ consisting of a subset $\pi \subseteq \Omega$ and its complement $\overline{\pi}$ such that $1 = G_{(\Pi)}= G_{ \pi } \cap G_{ \overline{\pi} } = G_{ \pi }$. Therefore, $D(G)=2$ is equivalent to $G$ having a regular orbit in its action on the power set of $\Omega$. For this reason, it is of particular interest to determine whether a group has a distinguishing number of two.

Along with the \emph{base size} (denoted $b(G)$, the smallest size of a subset of $\Omega$ such that only the identity of $G$ fixes the subset pointwise), the distinguishing number is a well-studied invariant of permutation groups. In fact, it is not difficult to see that these two invariants satisfy the inequality $D(G) \leq b(G) + 1$. We recommend the article of Bailey and Cameron \cite{bailey_cameron}, which surveys many of the important results on this topic and highlights further connections between the distinguishing number, base size and other interesting invariants of groups and graphs.

Primitive permutation groups are the most widely studied class of permutation groups and they are classified by the O'Nan--Scott Theorem (see \cite{LPSonanscott}). It is shown in \cite{cameron,cameron_neumann_saxl} that, apart from the alternating and symmetric groups in their natural actions, all but finitely many primitive permutation groups have distinguishing number two. Seress determined the list of such exceptions and, thus, established the following result \cite[Theorem~2]{seress}. (Throughout, if $|\Omega|=n$, then we will write $\sym(n)=\sym(\Omega)$ and $\alt(n)=\alt(\Omega)$.)

\begin{shtheorem}\label{t:seress}
Let $G \leq \sym(n)$ be primitive. Then one of the following holds:
\begin{enumerate}
\item $G=\sym(n)$ and $D(G)=n$;
\item $G=\alt(n)$ and $D(G)=n-1$;
\item $G \in \mathcal{P}$, where $\mathcal{P}$ is a known list of 43 permutation groups of degree at most 32;
\item $D(G) = 2$.
\end{enumerate}
\end{shtheorem}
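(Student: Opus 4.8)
The plan is to exploit the reformulation recorded in the introduction: $D(G)=2$ if and only if $G$ has a regular orbit on the power set $2^\Omega$. Granting the explicit descriptions of cases (i) and (ii), the theorem reduces to showing that every primitive group $G\leq\sym(n)$ with $\alt(n)\not\leq G$ satisfies $D(G)=2$, apart from finitely many exceptions, which one then enumerates to obtain the list $\mathcal P$. The natural strategy is therefore a counting argument over $2^\Omega$: I would estimate the number of subsets fixed setwise by some nontrivial element of $G$, and show that for $n$ large this falls short of the total number $2^n$ of subsets, forcing the existence of a subset with trivial setwise stabiliser.

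The key observation is that a subset $\pi\subseteq\Omega$ is fixed setwise by $g$ precisely when $\pi$ is a union of $\langle g\rangle$-orbits, so the number of such subsets is $2^{c(g)}$, where $c(g)$ denotes the number of orbits of $g$ on $\Omega$. If $g\neq 1$ has support of size $s$, then $c(g)\leq (n-s)+s/2=n-s/2$, and $s$ is bounded below by the minimal degree $m(G)$ of $G$. Hence the number of subsets fixed by at least one nontrivial element is at most $\sum_{g\neq 1}2^{c(g)}\leq(|G|-1)\,2^{\,n-m(G)/2}$, and a regular orbit on $2^\Omega$ exists whenever
\[
|G|-1 < 2^{\,m(G)/2}.
\]
So the whole problem is converted into comparing the order $|G|$ with a power of the minimal degree $m(G)$.

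I would then run this inequality through the O'Nan--Scott classification. For groups in which $\alt(n)$ is not involved, Cameron's theorem (built on the classification of finite simple groups) gives a polynomial order bound $|G|\leq n^{c}$ for the ``non-standard'' primitive types, while Babai-type lower bounds show that the minimal degree $m(G)$ grows at least like a power of $n$; for such groups the displayed inequality holds once $n$ exceeds an explicit constant. The ``standard'' families (subspace actions of classical groups, actions of $\sym$ and $\alt$ on partitions or subsets, and their product actions) have to be treated using their explicit structure, since there $|G|$ can be as large as $n^{c\log n}$. The groups $\sym(n)$ and $\alt(n)$ in their natural action are dispatched directly: any partition into fewer than $n$ parts has two points in a common part, and the transposition (for $\alt(n)$, a double transposition) interchanging two suitable points fixes every part, giving $D(\sym(n))=n$ and $D(\alt(n))=n-1$.

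The hard part will be the boundary: the clean counting bound is comfortably satisfied only for large $n$, and there is an awkward intermediate range, together with the high-order standard families above, where $|G|$ is too large for the crude estimate $c(g)\leq n-m(G)/2$ to suffice. For these cases I expect to need a sharper accounting of the cycle type distribution across $G$ (weighting elements by their actual support rather than the worst case), supplemented by direct computation in the finitely many small-degree groups that survive. Pinning down exactly which groups fail to have a regular orbit on subsets — and thereby isolating the precise 43 exceptions of degree at most $32$ comprising $\mathcal P$ — is the genuinely delicate, computation-heavy step, whereas the asymptotic statement $D(G)=2$ for large $n$ follows cleanly from the order and minimal-degree estimates.
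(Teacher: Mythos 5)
This theorem is not proved in the paper at all: it is imported from Seress \cite{seress}, who completed the programme begun in \cite{cameron,cameron_neumann_saxl}, so the comparison must be with that cited proof rather than with any in-paper argument. Your sketch is essentially that proof: the bound on the number of subsets with nontrivial setwise stabiliser by $\sum_{g \neq 1} 2^{c(g)} \leq (|G|-1)\,2^{n-m(G)/2}$ (using that a nontrivial element of support $s$ has at most $n-s/2$ orbits), played against CFSG-based order bounds of Cameron type and minimal-degree lower bounds, with the standard families (subspace, subset and partition actions, product actions) treated by their explicit structure, is exactly the Cameron--Neumann--Saxl device, and the refined accounting plus computer enumeration isolating the $43$ exceptions of degree at most $32$ is precisely Seress's contribution. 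Two small caveats: as you concede yourself, the genuinely delicate step --- actually pinning down $\mathcal{P}$ --- is only gestured at, so what you have is the correct strategy and a correct asymptotic dichotomy rather than a complete proof of part (iii); and in your direct treatment of $\alt(n)$, the lower bound $D(\alt(n)) \geq n-1$ needs $3$-cycles as well as double transpositions, since a partition into $n-2$ parts may consist of one part of size $3$ and singletons, in which case the even permutation fixing every part is a $3$-cycle.
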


An explicit description of the set $\mathcal{P}$ is given in \cite[Theorem~2]{seress} and we also give details in Section~\ref{s:computation}. As a consequence of \cite[Lemma~1]{dolfi}, the distinguishing number of each permutation group in $\mathcal{P}$ is known. In particular, $D(G) \leq 4$ for all $G \in \mathcal{P}$.
 
It is striking to us that the distinguishing number of all primitive groups (except the alternating and symmetric groups) is bounded by an absolute constant. In contrast, the base size of primitive   groups is \emph{not} bounded by an absolute constant (witnessed, for example, by the action of the symmetric group on subsets, see \cite{halasi}). The goal of this paper is to show that the dichotomy exhibited in Theorem~\ref{t:seress} by primitive groups is also  respected by a much wider class of permutation groups.

The most natural generalisation of the class of primitive groups is the class of \emph{quasiprimitive} permutation groups. These groups have been widely studied because of their utility in investigations into the automorphism groups of highly transitive graphs (see \cite{praeger_quasiprim}). A permutation group is quasiprimitive if each non-trivial normal subgroup is transitive. 

Recently, Duyan, Halasi and Mar\'oti completed the proof of Pyber's conjecture \cite{duyan_halasi_maroti}, and an important component of this proof is their result \cite[Theorem~1.2]{duyan_halasi_maroti} that if $G$ is a transitive permutation group of degree $n$, then 
\begin{equation}
|G|^{\frac{1}{n}} \leq D(G) \leq 48\,|G|^{\frac{1}{n}}. \label{e:dhm}
\end{equation}
In proving \eqref{e:dhm}, they establish that $D(G) \leq 4$ for all quasiprimitive groups (other than the alternating or symmetric groups). We improve this bound to obtain the following exact result.
\begin{shtheorem}\label{t:qp}
Let $G$ be a quasiprimitive permutation group that is not primitive. Then $D(G) = 2$.
\end{shtheorem}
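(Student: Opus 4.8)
The plan is to exploit the reformulation noted in the introduction: $D(G)=2$ is equivalent to $G$ having a regular orbit on the power set of $\Omega$, i.e.\ to the existence of a subset $\pi\subseteq\Omega$ with trivial setwise stabiliser $G_\pi$. My first move is to pass to a primitive quotient action. Since $G$ is quasiprimitive but not primitive, the point stabiliser $G_\omega$ is not maximal, so it lies in a maximal subgroup $M$ with $G_\omega<M<G$. The coset action of $G$ on $G/M$ is primitive, and it is the action of $G$ on a nontrivial $G$-invariant partition $\mathcal{B}$ of $\Omega$ into blocks of size $|M:G_\omega|>1$. Crucially this action is faithful: its kernel is the core of $M$ in $G$, a normal subgroup contained in the intransitive subgroup $M$, hence intransitive, hence trivial by quasiprimitivity. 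Thus $G$ acts faithfully and primitively on $\mathcal{B}$, so Theorem~\ref{t:seress} applies to this action.

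The key reduction is a lifting observation. Suppose the primitive action on $\mathcal{B}$ has a regular orbit on subsets, say $G_S=1$ for some $S\subseteq\mathcal{B}$, and set $\widetilde{S}=\bigcup_{B\in S}B\subseteq\Omega$. Since the elements of $G$ permute the blocks and distinct blocks are disjoint, for $g\in G$ we have $g(\widetilde{S})=\widetilde{S}$ if and only if $g(S)=S$; hence $G_{\widetilde{S}}=G_S=1$ and $D(G)=2$. Combining this with faithfulness, it suffices to show that the primitive group induced on $\mathcal{B}$ has distinguishing number two. By Theorem~\ref{t:seress} this holds unless the action on $\mathcal{B}$ is the natural action of $\sym(m)$ or $\alt(m)$, or a member of the finite list $\mathcal{P}$. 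For all other primitive quotients the theorem is already proved.

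It remains to treat the exceptional quotients, where the lift is unavailable and we must instead construct a distinguishing subset of $\Omega$ directly, using the extra room afforded by the blocks (recall $|\Omega|\geq 2|\mathcal{B}|$). When the quotient lies in $\mathcal{P}$ the group $G$ is one of finitely many fixed finite groups, so $n=|\Omega|=|G:G_\omega|$ and the admissible point stabilisers $G_\omega$ form a finite, explicitly enumerable list; each resulting permutation group can be settled by a finite computation. The genuinely hard case, and the main anticipated obstacle, is the infinite family in which the quotient is the natural action of $\sym(m)$ or $\alt(m)$. Here my plan is to work on $\Omega$ via the counting bound: a nontrivial $g\in G$ fixes exactly $2^{c(g)}$ subsets setwise, where $c(g)$ is the number of cycles of $g$ on $\Omega$, so a regular orbit exists whenever $\sum_{1\neq g\in G}2^{c(g)}<2^{n}$. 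Writing $\mu(g)$ for the support size gives $c(g)\leq n-\mu(g)/2$, so the estimate succeeds once the minimal degree of $G$ on $\Omega$ is large relative to $\log|G|$.

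The difficulty is precisely that $\sym(m)$ and $\alt(m)$ contain elements of small support --- most dangerously the images of transpositions and $3$-cycles --- which can fix comparatively many points of $\Omega$ and therefore many subsets. Controlling these requires a case analysis of the possible subgroups $G_\omega$ inside the block stabiliser $\sym(m-1)$ (or $\alt(m-1)$), bounding below the number of points each small-support element moves in the induced action, and handling the finitely many small values of $m$ (where the crude counting fails) by explicit constructions or computation. Quasiprimitivity enters here by constraining $G_\omega$ so that the relevant minimal normal subgroup remains transitive on $\Omega$, which is what ultimately forces the minimal degree to grow with $m$.
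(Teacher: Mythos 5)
Your opening reduction coincides with the paper's: the paper fixes a maximal $G$-invariant partition $\Sigma$ (your $\mathcal{B}$), observes that quasiprimitivity makes the induced primitive action faithful, invokes Theorem~\ref{t:seress}, and lifts a distinguishing partition from $\Sigma$ to $\Omega$; your lifting observation $G_{\widetilde{S}}=G_S$ is exactly the quasiprimitive (trivial kernel) case of the paper's Lemma~\ref{l:reduction}(i), and it is correct. The divergence is in the exceptional quotients, and here the paper has one simple device that your proposal lacks and that does almost all the work: Lemma~\ref{l:reduction}(ii), which says that if every block has size at least $|\Sigma|-1$ then $D(G)=2$ outright --- take $i-1$ points from the $i$th block, so that a stabilising element must fix every block (the intersection sizes are pairwise distinct) and hence lies in the trivial kernel. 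This reduces everything to $|M:G_\omega|<n-1$ where $M$ is a block stabiliser. For $G^\Sigma\in\{\alt(n),\sym(n)\}$ with $n\geq 6$, the only proper subgroup of $\sym(n-1)$ of index less than $n-1$ is $\alt(n-1)$, which quasiprimitivity excludes (it forces $\alt(n)$ to be an intransitive normal subgroup), and $\alt(n-1)$ has no proper subgroup of index less than $n-1$; so only $n=5$ survives, with $G_\omega \cong D_8$ or $V_4$, settled by an explicit $3$-subset of the degree-$15$ action on partitions of shape $1,2,2$. Your counting plan for this family is viable --- by faithfulness, any $g\neq 1$ moving $s$ blocks moves at least $s|B|$ points, so $\sum_{1 \neq g}2^{c(g)}\leq 2^n\sum_{s\geq 2}\bigl(m\,2^{-|B|/2}\bigr)^s<2^n$ once $|B|\geq 2\log_2 m+2$ --- but notice it still needs the same subgroup-index dichotomy to rule out the range $2\leq |B|<n-1$ where the estimate fails, and that structural fact, which you only gesture at via ``a case analysis of the possible subgroups $G_\omega$'', is the entire content of the paper's argument. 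Your worry about transpositions is dissolved by the block-support bound rather than by delicate estimates.

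There is also a concrete hole in your treatment of the $\mathcal{P}$ quotients: ``each resulting permutation group can be settled by a finite computation'' is not true at face value. Since $G\cong G^\Sigma$ as abstract groups, $G$ can be, say, $\mathrm{M}_{24}$ with $G_\omega$ tiny --- even $G_\omega=1$ is admissible, as $\mathrm{M}_{24}$ is simple and hence quasiprimitive in its regular representation of degree roughly $2.4\times 10^{8}$ --- so while the list of stabilisers is finite, the degrees are bounded only by $|G|$ and a naive check of $D(G)=2$ is infeasible. The paper caps the degrees first: Lemma~\ref{l:reduction}(ii) disposes of all blocks of size at least $n-1$, quasiprimitivity eliminates every affine member of $\mathcal{P}$ (the elementary abelian normal subgroup would be intransitive on $\Omega$ since $|\Omega|\geq 2n$) and several almost simple members (either every maximal $K\leq M$ has $|M:K|\geq n-1$, or $|M:K|<n-1$ forces $G_\omega\leq\mathrm{soc}(G^\Sigma)$ and hence an intransitive socle), and only then runs \textsc{Magma} on the small coset actions $G^{\Sigma_K}$ that remain, using Lemma~\ref{l:reduction}(i) a second time. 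Your own counting bound would rescue your large-degree cases, since block size grows as degree grows for fixed $n\leq 32$, but you must actually deploy it there; as written, this step of your proposal does not go through.
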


The class of \emph{semiprimitive} groups consists of the permutation groups for which each normal subgroup is transitive or semiregular. Evidently, every quasiprimitive group is semiprimitive. The interest in semiprimitive groups comes primarily from investigations into collapsing monoids \cite{bereczky_maroti} and conjectures in algebraic graph theory \cite{potocnik_spiga_verret}. For all semiprimitive groups $G$ of degree $n$ (other than $\alt(n)$ or $\sym(n)$), it was recently proved that $|G| < 4^n$ (see \cite[Theorem 1.5(1)]{kyle}), which together with \eqref{e:dhm} implies that $D(G) < 192$. In fact, we will prove the following much stronger result.

\begin{shtheorem}\label{t:sp}
Let $G$ be a semiprimitive permutation group that is not quasiprimitive. Then $D(G)=2$ or $G=\GL(2,3)$ in its degree $8$ action and $D(G)=3$.
\end{shtheorem}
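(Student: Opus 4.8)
The plan is to reduce the classification of semiprimitive non-quasiprimitive groups to a tractable structural situation and then analyse that situation directly. A semiprimitive group $G$ that is not quasiprimitive possesses a non-trivial normal subgroup $N$ that is semiregular but \emph{not} transitive; since $N$ is semiregular, all its orbits have the same size and form a block system $\mathcal{B}$ for $G$. First I would record the basic structure: $G$ acts on the set of $N$-orbits $\mathcal{B}$, giving a quotient action $\overline{G} = G^{\mathcal{B}} \leq \sym(\mathcal{B})$, and this quotient is itself quasiprimitive (in fact one can show it is primitive or genuinely quasiprimitive) because a normal subgroup of $G$ acting trivially on $\mathcal{B}$ but nontrivially on $\Omega$ would contradict semiprimitivity. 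The key leverage is that $D(G) = 2$ is equivalent to $G$ having a regular orbit on subsets of $\Omega$, so I want to build a distinguishing partition of $\Omega$ out of data on $\mathcal{B}$ and data within blocks.

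\smallskip

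\noindent The main construction I would pursue is a lifting strategy. By Theorem~\ref{t:qp} and Theorem~\ref{t:seress}, the quotient $\overline{G}$ on $\mathcal{B}$ has distinguishing number two unless it is (an affine-type case or) a symmetric/alternating group or one of the finitely many groups in $\mathcal{P}$; in the generic case I can choose a subset $\overline{\pi} \subseteq \mathcal{B}$ whose setwise stabiliser in $\overline{G}$ is trivial. Pulling $\overline{\pi}$ back to a union of blocks $\pi \subseteq \Omega$ kills every element of $G$ outside the kernel $K$ of the action on $\mathcal{B}$ (the elements that move the blocks around), so I am reduced to distinguishing the elements of $K$, which act \emph{within} each block. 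Since $N \leq K$ is semiregular, $K$ acts semiregularly-compatibly on each block of size $|N|$, and I can try to refine $\pi$ by selecting, inside selected blocks, suitable sub-subsets (for instance, choosing a point or a smaller configuration in just enough blocks) so that only the identity of $K$ preserves the whole decorated set. The arithmetic here is favourable because there are many blocks available to encode independent choices, and the regular action of $N$ means a single well-chosen marked point in a block already cuts $N$ down substantially.

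\smallskip

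\noindent The hard part will be the boundary cases where the generic lifting argument fails, and I expect essentially all the real content (and the exceptional example $\GL(2,3)$) to live there. These are: (a) when the quotient $\overline{G}$ is symmetric or alternating in its natural action, so Theorem~\ref{t:seress} gives large distinguishing number and I cannot find a single block-stabilising subset to start the lift; (b) when $\overline{G}$ lies in the finite exceptional list $\mathcal{P}$; and (c) when the blocks are very small (say $|N| = 2$) so there is little room to distinguish inside a block and the interaction between the within-block action and the across-block action becomes delicate. For case (a) I would argue that the semiregular normal subgroup $N$ combined with the constraint that $G$ is \emph{not} quasiprimitive forces enough extra structure (a large affine or product-type bottom layer) that a direct ad hoc two-part partition still exists, treating the within-block coordinates as extra labels that break the symmetric/alternating symmetry of $\mathcal{B}$. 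For case (b), and for the smallest degrees arising in case (c), I would fall back on a finite computation: enumerate the finitely many candidate groups $G$ (bounded because $\overline{G} \in \mathcal{P}$ and $|N|$ is bounded in the residual small-block situation) and check $D(G)$ directly, which is how $\GL(2,3)$ of degree $8$ with $D(G) = 3$ is isolated as the unique exception. The crux of the whole proof is therefore controlling exactly which small configurations survive the lifting reduction, and verifying that $\GL(2,3)$ is the only one with no regular orbit on subsets.
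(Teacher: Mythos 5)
Your overall skeleton matches the paper's: pass to the set of $N$-orbits, lift a distinguishing partition from the quotient using semiregularity of the kernel (this is exactly Lemma~\ref{l:reduction}(i), where fixing a single marked point kills the kernel entirely, not just ``substantially''), and isolate $\GL(2,3)$ by small-degree computation. But there are three genuine gaps. First, your structural claim that the quotient $\overline{G}$ is automatically quasiprimitive is false: the argument you give only shows the kernel of the action on $\mathcal{B}$ is $N$ itself; the quotient is merely semiprimitive (Lemma~\ref{l:sp}(ii)), and is quasiprimitive only if $N$ is \emph{maximal} among intransitive normal subgroups (Lemma~\ref{l:sp}(iii)). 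The paper wants $N$ \emph{minimal} normal (to exploit $N \cong T^d$), so it instead runs a minimal-counterexample induction: if $G^\Sigma$ is not quasiprimitive, minimality forces $G^\Sigma = \GL(2,3)$ of degree $8$, which is then excluded by a degree count. Second, you never state the other half of the reduction lemma, Lemma~\ref{l:reduction}(ii): if every block has size at least $|\Sigma|-1$ then $D(G)=2$ outright (via a partition placing $i-1$ marked points in the $i$th block). This is what bounds $|N| < n-1$ in every residual case, and without it your assertion in case (b) that ``$|N|$ is bounded'' has no justification and the problem does not become finite.

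Third, and most seriously, your treatment of the hard cases defers precisely the content of the paper to steps that would fail. For case (a) ($\overline{G}$ symmetric or alternating), no ``ad hoc two-part partition'' construction is available or needed: the correct resolution is that such configurations with $|N| < n-1$ simply \emph{do not exist} as semiprimitive groups, which the paper deduces from the structural theorem \cite[Theorem~1.7]{kyle} (the options $d \geq n-2$, or $\alt(n-1)$ embedding in $T$, or $(n,|N|)=(8,16)$ each contradict $|N|<n-1$); your vague appeal to ``a large affine or product-type bottom layer'' breaking the symmetry is not a proof and has no obvious route to one. For case (b), brute-force enumeration of all $G$ with $\overline{G}\in\mathcal{P}$ and $|N|\leq 30$ would require searching transitive groups of degree up to $32\cdot 30$, far beyond any available database; the paper instead works structurally --- splitting $\mathcal{P}$ into affine and almost simple quotients, using Schur--Zassenhaus, irreducible module dimensions, the computation $\mathrm{H}^2(\mathrm{AGL}(5,2),C_2)=1$, Lemma~\ref{l:sp_case} for $|N|$ prime, and Schur multipliers of the socles in Table~\ref{tab:asP} --- to whittle everything down to a handful of explicit groups (e.g.\ central extensions involving $\PSL(2,16)$ and $\PSL(3,4)$) that are then eliminated by index or transitivity arguments. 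The only feasible computation is the degree-at-most-$47$ check of Lemma~\ref{l:computation}, which anchors the induction and isolates $\GL(2,3)$; the rest of the proof is group theory, not search.
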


Theorem~\ref{t:sp} encompasses a wide class of permutation groups and we highlight one specific application. The distinguishing number of $G=\GL(d,q)$ acting on the non-zero vectors of $\mathbb F_q^d$ is considered in \cite{chanI,klavzar_wong_zhu}. Since this group is semiprimitive (and not quasiprimitive unless $q=2$), the results of \cite{chanI,klavzar_wong_zhu} can be recovered with a straightforward application of Theorems~\ref{t:seress}--\ref{t:sp}. We have $D(G)=2$ unless $(d,q) \in \{(2,2), (4,2), (2,3)\}$ and $D(G)=3$.

In general, there is no absolute bound on the distinguishing number of imprimitive groups; for example, the action of $\sym(m) \wr \sym(m)$ on the set $\{ (i,j) \mid 1\leqslant i,j\leqslant m\}$ has distinguishing number $m+1$ (see \cite[Theorem~2.3]{chanII} for a more general statement). It would be interesting to know if anything sensible can be said about the class of permutation groups with distinguishing number bounded by some constant.

When combined with Theorem~\ref{t:seress} and \cite[Lemma~1]{dolfi}, our main results have the following consequence. 
\begin{shcorollary}\label{c:corollary}
Let $G \leq \sym(\Omega)$ be semiprimitive and assume that $\alt(\Omega) \not\leq G$. Then $G$ has a regular orbit on the power set of $\Omega$ (i.e. $D(G)=2$) or $G$ is one of 44 groups. In all cases, $D(G) \leq 4$.
\end{shcorollary}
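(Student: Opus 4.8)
The plan is to prove the corollary by a case analysis on the position of $G$ in the hierarchy of primitive, quasiprimitive, and semiprimitive groups, invoking the appropriate one of Theorems~\ref{t:seress},~\ref{t:qp}, and~\ref{t:sp} in each case. Recall that every primitive group is quasiprimitive and every quasiprimitive group is semiprimitive, so a semiprimitive group $G$ falls into exactly one of the following three classes: $G$ is primitive; $G$ is quasiprimitive but not primitive; or $G$ is semiprimitive but not quasiprimitive. These three classes are mutually exclusive and exhaustive among semiprimitive groups, which is what makes the synthesis clean.

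First I would dispose of the two imprimitive cases, which require no further work. If $G$ is quasiprimitive but not primitive, then Theorem~\ref{t:qp} gives $D(G)=2$ immediately, so $G$ has a regular orbit on the power set of $\Omega$. If $G$ is semiprimitive but not quasiprimitive, then Theorem~\ref{t:sp} gives either $D(G)=2$, or $G=\GL(2,3)$ in its degree $8$ action with $D(G)=3$; in the latter case we have produced one exceptional group, with $D(G)=3\leq 4$.

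The primitive case is handled by Theorem~\ref{t:seress}. Since $\alt(\Omega)\not\leq G$, the two cases of that theorem in which $G=\sym(n)$ or $G=\alt(n)$ are excluded, leaving either $D(G)=2$ or $G\in\mathcal{P}$. When $G\in\mathcal{P}$, the distinguishing number is determined by \cite[Lemma~1]{dolfi}, and in particular $D(G)\leq 4$; this contributes the $43$ groups comprising $\mathcal{P}$.

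Finally I would collect the exceptions and confirm the count. The groups with $D(G)\neq 2$ are precisely the $43$ members of $\mathcal{P}$ together with $\GL(2,3)$ in its degree $8$ action, and in every such case $D(G)\leq 4$. The only point requiring a check is that these two lists are disjoint, so that the total is genuinely $44$: this is immediate because $\mathcal{P}$ consists of primitive groups, whereas $\GL(2,3)$ of degree $8$ is imprimitive (indeed, not even quasiprimitive, by the hypothesis of Theorem~\ref{t:sp}). I do not expect any substantial obstacle here, since all the difficulty is concentrated in the three theorems; the corollary is their synthesis together with the elementary bookkeeping of the exceptional list.
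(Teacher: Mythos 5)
Your proof is correct and is exactly the argument the paper intends: the corollary is stated there as an immediate synthesis of Theorems~\ref{t:seress},~\ref{t:qp} and~\ref{t:sp} together with \cite[Lemma~1]{dolfi}, via the same trichotomy you use, with the $43$ groups of $\mathcal{P}$ plus $\GL(2,3)$ of degree $8$ giving the $44$ exceptions. Your extra remark that the two exceptional lists are disjoint (since $\mathcal{P}$ consists of primitive groups while $\GL(2,3)$ of degree $8$ is imprimitive) is a correct piece of bookkeeping that the paper leaves implicit.
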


Interest in the distinguishing number began in graph theory, so we provide one application of our results in this context, which we prove in Section~\ref{s:proof}.

\begin{shcorollary}\label{c:graph}
Let $\Gamma$ be a finite simple connected graph which is not complete and assume that $\mathrm{Aut}(\Gamma)$ is semiprimitive.  Then $D(\Gamma) = 2$, unless $\Gamma$ is the cycle of length $5$, the Hamming graph $H(2,3)$, the Petersen graph or its complement, in which case $D(\Gamma)=3$. 
\end{shcorollary}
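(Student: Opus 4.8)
The distinguishing number of a graph $\Gamma$ is, by definition, the smallest number of colours needed to colour $V(\Gamma)$ so that only the trivial automorphism preserves the colouring; a colouring into $k$ colours is precisely a partition of $V(\Gamma)$ into $k$ parts, and an automorphism preserves the colouring exactly when it stabilises each part. Hence $D(\Gamma) = D(\mathrm{Aut}(\Gamma))$, where $\mathrm{Aut}(\Gamma)$ acts on $\Omega := V(\Gamma)$, and the statement becomes a question about the permutation group $G := \mathrm{Aut}(\Gamma)$, which is assumed semiprimitive. The plan is to feed $G$ into Corollary~\ref{c:corollary}, for which I first need $\alt(\Omega) \not\leq G$. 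Since $\Gamma$ is connected and not complete it has at least three vertices and is neither complete nor empty, so its edge set is a proper non-empty $G$-invariant set of $2$-subsets of $\Omega$. For $|\Omega| \geq 3$ the group $\alt(\Omega)$ is transitive on $2$-subsets, so no group containing $\alt(\Omega)$ can leave such a set invariant; thus $\alt(\Omega) \not\leq G$.

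With this in hand, Corollary~\ref{c:corollary} gives $D(\Gamma) = D(G) = 2$ unless $G$ is one of the $44$ exceptional groups appearing there, each with $D(G) \leq 4$. It therefore remains to decide, for each of these $44$ permutation groups $G$, whether $G$ is the \emph{full} automorphism group of a connected non-complete graph on $\Omega$, and, if so, to record $D(G)$. The first filter is $2$-transitivity: if $G$ is $2$-transitive on $\Omega$ then every $G$-invariant graph is complete or empty, so no $2$-transitive group in the list contributes. For each remaining group the $G$-invariant graphs are exactly the unions of non-diagonal $G$-orbitals, a finite list that can be examined directly (and checked in GAP, as in Section~\ref{s:computation}); for each such $\Gamma$ one computes $\mathrm{Aut}(\Gamma)$ and retains only those with $\mathrm{Aut}(\Gamma) = G$ and $\Gamma$ connected and not complete.

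Carrying out this check leaves exactly four graphs with $D(\Gamma) = D(G) = 3$: the cycle $C_5$ (with $G = D_{10}$ of degree $5$), the Hamming graph $H(2,3) = K_3 \,\square\, K_3$ (with $G = \sym(3) \wr \sym(2)$ of degree $9$), and the Petersen graph together with its complement, the triangular graph $T(5)$, both of degree $10$ with $G = \sym(5)$ acting on the $2$-subsets of $\{1,\dots,5\}$. The main obstacle is precisely this case analysis over the exceptional groups: one must verify that every other non-$2$-transitive group in the list of $44$ either fails to be realised as a full graph automorphism group (its orbital graphs being disconnected, complete, or having strictly larger automorphism group) or, when it is so realised, yields $D(\Gamma) = 2$. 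In particular one checks that $\GL(2,3)$ in its degree $8$ action, the unique non-quasiprimitive exception, is not the full automorphism group of any connected non-complete graph, so that it contributes no exception here.
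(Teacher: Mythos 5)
Your proposal is correct and takes essentially the same route as the paper: reduce to $G=\mathrm{Aut}(\Gamma)$, exclude $\alt(\Omega)\leq G$ via the $G$-invariance of the edge set, invoke Corollary~\ref{c:corollary}, discard the $2$-transitive exceptional groups, and settle the survivors ($D_{10}$ of degree $5$, $3^2.D_8=\sym(3)\wr\sym(2)$ of degree $9$, $\sym(5)$ of degree $10$, and $\GL(2,3)$ of degree $8$) by examining their invariant graphs. The only difference is in how the residual check is organised: where you propose enumerating unions of orbitals computationally for every surviving group, the paper observes that each of these groups has rank $3$, so each is contained in the automorphism group of exactly one graph and its complement (giving $C_5$, $H(2,3)$, the Petersen graph and its complement at once), and disposes of $\GL(2,3)$ by noting its two orbital graphs are a perfect matching (disconnected) and $K_{2,2,2,2}$, whose full automorphism group is the strictly larger $\sym(2)\wr\sym(4)$.
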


Note that the distinguishing number of a (simple undirected) graph $\Gamma$ is the distinguishing number of the permutation group induced on the vertex set. The class of graphs covered by the above corollary is large and well studied. For example, Praeger proved that the automorphism group of every non-bipartite 2-arc-transitive graph is semiprimitive (see \cite[Lemma 1.6]{praeger_imprim}).

This paper is laid out as follows. In Section~\ref{s:reductions} we provide several reduction tools, which allow us to connect the semiprimitive, quasiprimitive and primitive cases. In some cases, this leads to situations that require computational power, details of which are given in Section~\ref{s:computation}. Finally, the main results are proved in Section~\ref{s:proof}.

\section{Reduction lemmas}\label{s:reductions}

In this section, we will present several preliminary results that draw connections between the distinguishing numbers of primitive, quasiprimitive and semiprimitive groups. 

Let $\Omega$ be a finite set. Throughout, if $G \leq \sym(\Omega)$, then by a \emph{maximal $G$-invariant partition of $\Omega$} we mean a $G$-invariant partition $\Sigma$ such that no strictly coarser partition with at least two parts is $G$-invariant (that is to say, there is no $G$-invariant partition $\Sigma'\neq \Sigma$ with at least two parts all of whose parts are unions of parts of $\Sigma$). The group $G^\Sigma$ induced on $\Sigma$  is primitive if and only if $\Sigma$ is maximal partition. Further, if $G$ is quasiprimitive, then $G^\Sigma \cong G$ (as abstract groups), see the final paragraph of \cite[Section 1]{praeger_shalev}.

Let $\omega\in\Omega$. If $L$ is a proper subgroup of $G$ which contains $G_\omega$, then the orbit $\omega^L$ gives rise to  $\Sigma_L:= \{ \{\omega^L \}^g : g\in G\}$, a non-trivial $G$-invariant partition of $\Omega$. Conversely, if $\Sigma$ is a  $G$-invariant partition of $\Omega$, then $\Sigma=\Sigma_L$ where $L=G_\sigma$  for the unique part $\sigma \in \Sigma$ such that $\omega \in \sigma$. Moreover, the action of $G$ on $\Sigma_L$ is isomorphic to the action on the set $G/L$ of right cosets of $L$ in $G$. It is easy to see that $\Sigma_L$ is a maximal $G$-invariant partition of $\Omega$ if and only if $L$ is a maximal subgroup of $G$.

We first record a result on semiprimitive groups that we will make use of throughout the paper.

\begin{lemma}\label{l:sp}
Let $G \leq \sym(\Omega)$ be semiprimitive but not quasiprimitive, let $N \leqn G$ be an intransitive normal subgroup and let $\Sigma$ be the set of $N$-orbits. Then the following hold.
\begin{enumerate}
\item The kernel of the action of $G$ on $\Sigma$ is $N$. In particular, $G^\Sigma \cong G/N$.
\item The group $G^\Sigma$ is semiprimitive.
\item The group $G^\Sigma$ is quasiprimitive if and only if $N$ is maximal among intransitive normal subgroups of $G$.
\item If $\sigma \in \Sigma$ and $\omega \in \sigma$, then $(G^\Sigma)_{\sigma} = G_{\omega}N/N \cong G_{\omega}$. 
\end{enumerate} 
\end{lemma}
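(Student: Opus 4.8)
The plan rests on a single observation that powers all four parts: since $G$ is semiprimitive, every intransitive normal subgroup is semiregular and hence has trivial point stabilisers. In particular $N$ is semiregular, so $N_\omega = 1$ for each $\omega \in \Omega$; consequently every part of $\Sigma$ is an $N$-orbit of size $|N|$, and there are at least two parts because $N$ is intransitive. This fact, combined with an elementary orbit-counting argument, is the engine behind the whole lemma, and the recurring subtlety is the little counting step that a semiregular normal subgroup which contains $N$ and preserves a part of size $|N|$ must in fact equal $N$.

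For (i), I would let $K$ be the kernel of the action of $G$ on $\Sigma$. Each element of $K$ fixes every part of $\Sigma$ setwise, and since $|\Sigma| \geq 2$ this forces $K$ to be intransitive; being also normal in $G$, it is semiregular. Fixing $\omega \in \sigma$, the orbit $\omega^K$ lies inside $\sigma$, so $|K| = |\omega^K| \leq |\sigma| = |N|$, and as $N \leq K$ is clear I conclude $K = N$, whence $G^\Sigma \cong G/N$.

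For (ii) and (iii), I would examine an arbitrary normal subgroup of $G^\Sigma$, which takes the form $M/N$ for some $N \leq M \leqn G$. The key equivalence is that $M$ is transitive on $\Omega$ if and only if $M/N$ is transitive on $\Sigma$: the forward direction is immediate, and the reverse holds because $N \leq M$ already acts transitively on each part. If instead $M$ is intransitive, then $M$ is semiregular, so rerunning the counting argument on the setwise stabiliser $M_\sigma$ gives $M_\sigma = N$; hence $(M/N)_\sigma = M_\sigma N/N$ is trivial and $M/N$ is semiregular on $\Sigma$. This dichotomy establishes (ii). For (iii), $G^\Sigma$ is quasiprimitive exactly when every nontrivial $M/N$ is transitive on $\Sigma$, equivalently when every normal subgroup $M$ with $N \subsetneq M$ is transitive on $\Omega$; by the equivalence above this says precisely that no intransitive normal subgroup of $G$ properly contains $N$, i.e.\ that $N$ is maximal among the intransitive normal subgroups. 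Matching transitivity upstairs with transitivity downstairs is the most delicate point, but it reduces cleanly to the block structure.

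Finally, for (iv), I note that $(G^\Sigma)_\sigma = G_\sigma N/N$, where $G_\sigma$ is the setwise stabiliser of $\sigma$ in $G$, and I would show $G_\sigma = G_\omega N$. The inclusion $G_\omega N \leq G_\sigma$ is clear, while for the reverse any $g \in G_\sigma$ satisfies $\omega^g \in \sigma = \omega^N$, so $\omega^g = \omega^n$ for some $n \in N$, giving $gn^{-1} \in G_\omega$ and thus $g \in G_\omega N$. The second isomorphism theorem then yields $(G^\Sigma)_\sigma = G_\omega N/N \cong G_\omega/(G_\omega \cap N)$, and $G_\omega \cap N = N_\omega = 1$ by the semiregularity of $N$ completes the proof.
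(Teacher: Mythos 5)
Your proof is correct in all four parts, and it is worth noting that the paper itself gives no argument here: it delegates (i), (ii) and (iv) to \cite[Lemma~3.1]{lukemichael} and declares (iii) clear, so your self-contained write-up is genuinely different in that it reconstructs what the citation buys. Your single engine is sound and well chosen: intransitive normal subgroups of a semiprimitive group are semiregular, so a semiregular subgroup whose orbit is trapped inside a part of size $|N|$ has order at most $|N|$, and if it contains $N$ it equals $N$. Applied to the kernel $K$ (whose orbits lie inside parts of $\Sigma$, of which there are at least two, so $K$ is intransitive and hence semiregular) this gives (i); applied to the setwise stabiliser $M_\sigma$ of an intransitive normal $M$ containing $N$ it gives $(M/N)_\sigma = M_\sigma/N = 1$, which together with your transitivity transfer ($M$ transitive on $\Omega$ if and only if $M/N$ transitive on $\Sigma$, the reverse direction using that $N \leq M$ is transitive on each part) yields (ii) — and note your counting step correctly only needs $M_\sigma$ to be a subgroup of the semiregular $M$, not to be normal itself. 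Your (iii) handles the one point that requires care: quasiprimitivity of $G^\Sigma$ quantifies only over normal subgroups of $G$ containing $N$, and maximality of $N$ among intransitive normal subgroups likewise only quantifies over overgroups of $N$, so intransitive normal subgroups incomparable with $N$ are irrelevant and the equivalence is exact. Part (iv) is the standard computation $G_\sigma = G_\omega N$ with $G_\omega \cap N = N_\omega = 1$ by semiregularity, followed by the second isomorphism theorem. The only cosmetic remark: if one's definition of semiprimitivity includes transitivity of the group itself, you should add the (immediate) observation that $G^\Sigma$ is transitive because $G$ is.
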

\begin{proof}
For (i), (ii) and (iv) see \cite[Lemma 3.1]{lukemichael}. Part (iii) is clear.
\end{proof}

The following  result is crucial. It will allow us to relate the distinguishing number of a semiprimitive (or quasiprimitive) group with the distinguishing number of the group induced on a partition.

\begin{lemma}\label{l:reduction}
Let $G \leq \sym(\Omega)$ be semiprimitive and let $\Sigma$ be a non-trivial $G$-invariant partition of $\Omega$. Then
\begin{enumerate}
\item $D(G) \leq D(G^\Sigma)$;
\item if $|\sigma| \geq |\Sigma|-1$ for all $\sigma \in \Sigma$, then $D(G)=2$.
\end{enumerate}
\end{lemma}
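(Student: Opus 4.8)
The plan is to run both parts through the kernel $K$ of the action of $G$ on $\Sigma$. Since $\Sigma$ is non-trivial it has at least two parts, so $K$ fixes every part setwise and is therefore intransitive; as $K \trianglelefteq G$ and $G$ is semiprimitive, $K$ must be semiregular. Moreover $G$ is transitive and permutes the (at least two) parts of $\Sigma$ non-trivially, so $G^\Sigma \neq 1$ and hence $k := D(G^\Sigma) \geq 2$. These two facts — that $K$ is semiregular and that $k \geq 2$ — drive everything.

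For (i) I would begin from a distinguishing partition $\{P_1,\dots,P_k\}$ of $\Sigma$ for $G^\Sigma$ and lift it to $\Omega$ by colouring each point with the colour of the part of $\Sigma$ containing its block. A short computation shows the setwise stabiliser of this lifted colouring is exactly $K$: an element $g$ preserving it induces $\bar g \in G^\Sigma$ preserving $\{P_1,\dots,P_k\}$, whence $\bar g = 1$ and $g \in K$, and conversely every element of $K$ preserves the lift. If $K = 1$ this already gives $D(G) \leq k$. If $K \neq 1$, then every block has size at least $|K| \geq 2$, and I would repair the lift by recolouring a single point $\omega_0$ of one block $\sigma_0$ with a colour different from that of $\sigma_0$ (possible as $k \geq 2$). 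Now $\sigma_0$ is the unique non-monochromatic block, so any colour-preserving $g$ forces $\bar g$ to fix $\sigma_0$ and to preserve the colours of all remaining (monochromatic) blocks; hence $\bar g$ preserves the original block-colouring, so $\bar g = 1$ and $g \in K$. Since $\omega_0$ is the only point of its new colour inside $\sigma_0$ and $g \in K$ fixes $\sigma_0$ setwise, $g$ fixes $\omega_0$, and semiregularity of $K$ then forces $g = 1$. Thus $D(G) \leq k = D(G^\Sigma)$.

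For (ii), write $m = |\Sigma|$ and suppose $|\sigma| \geq m-1$ for all $\sigma$. I would use only two colours, red and blue, and assign to the $m$ blocks pairwise distinct red-counts $0,1,\dots,m-1$; this is feasible precisely because each block can hold up to $|\sigma| \geq m-1$ red points, which is where the size hypothesis is consumed. As the red-counts are distinct, any colour-preserving $g$ must fix every block, so $\bar g = 1$ and $g \in K$. To eliminate $K$ I would exploit the block of red-count exactly $1$ (which exists since $m \geq 2$): its single red point must be fixed by $g$, and semiregularity of $K$ forces $g = 1$. Hence this $2$-colouring has trivial stabiliser, and as $G \neq 1$ we conclude $D(G) = 2$.

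The main obstacle in both parts is the possibly non-trivial kernel $K$: a naive block-lift never distinguishes $G$, precisely because $K$ fixes every block. The conceptual crux is that semiprimitivity upgrades this intransitive $K$ to a semiregular one, so that marking a single point inside one block is enough to pin down all of $K$ simultaneously. In writing up I would take care with the degenerate cases ($K = 1$, or blocks of size $1$) and verify the ``unique mixed block'' and ``unique red point'' claims rigorously, since these are the points at which the argument could silently fail.
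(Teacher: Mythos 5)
Your proof is correct and takes essentially the same approach as the paper's: lift a distinguishing partition of $\Sigma$ to $\Omega$, recolour a single point of one block, and use the semiregularity of the kernel (forced by semiprimitivity) to kill the residual stabiliser, while your part (ii) with pairwise distinct red-counts $0,1,\dots,|\Sigma|-1$ is exactly the paper's construction. The only cosmetic difference is your case split on $K=1$ versus $K\neq 1$ in part (i), which the paper avoids by moving the marked point unconditionally.
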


\begin{proof} 
Let $N$ be the kernel of the action of $G$ on $\Sigma$ (so $G^\Sigma \cong G/N$). Since $N$ is normal in $G$ and is intransitive, $N$ must be semiregular.

Let $d = D(G^\Sigma)$ and let $\Pi = \{ \pi_1, \dots, \pi_d \}$ be a partition of the parts of $\Sigma$ such that $(G^\Sigma)_{(\Pi)}=1$. There is an obvious way to extend $\Pi$ to a distinguishing partition of $\Omega$ when $G$ is quasiprimitive (see \cite[Lemma 2.7]{duyan_halasi_maroti}), but we must take more care when $N$ is non-trivial.

Fix a part $\tau \in \pi_1$ and  $\alpha \in \tau$. Let $\Pi'= \{ \pi_1',\dots,\pi_d' \}$ be the partition of $\Omega$, where for each $i$, $\pi'_i$ consists of the points in $\Omega$ that are in a $\Sigma$-part belonging to $\pi_i$, except we remove $\alpha$ from $\pi'_1$ and add it to $\pi'_2$ instead. 

Let $g \in G_{(\Pi')}$. Suppose that $\alpha^g \neq \alpha$. By the definition of $\pi_2'$, we see that $\alpha^g \in \sigma$ for some $\sigma \in \pi_2$. Therefore, $\tau^g=\sigma \in \pi_2$. Since $|\tau| \geq 2$, there exists $\beta \in \tau$ with $\beta \neq \alpha$. Record that $\beta^g \in \sigma$, so $\beta^g \in \pi_2'$. However, $\beta \in \pi_1'$, since $\beta \in \tau $ and $\tau\in \pi_1$, so $\beta^g \in \pi_1'$, which is a contradiction since $\pi_1'$ and $\pi_2'$ are disjoint. Therefore, $\alpha^g=\alpha$.

Since $g$ fixes $\alpha$ and also stabilises $\pi_i'$ setwise, it easily follows that $g$, in its induced action on $\Sigma$, stabilises $\Pi$, and so $g$ is in the kernel $N$ of the action of $G$ on $\Sigma$. Since $N$ is semiregular and $g$ fixes $\alpha$, we conclude that $g=1$. Therefore, $G_{(\Pi')} = 1$, so $D(G) \leq d = D(G^\Sigma)$. This proves part~(i). 

Now consider part~(ii). Let $\sigma_1, \dots, \sigma_n$ be the parts of $\Sigma$, and assume that $|\sigma_i| \geq n-1$. Let $A$ be a set consisting of exactly $i-1$ points from $\sigma_i$ for each $1 \leq i \leq n$. Let $g \in G_A$. Therefore, $g$ stabilises $A$ and permutes the parts of $\Sigma$. Since the $|A \cap \sigma_i| \neq |A \cap \sigma_j|$ for $i \neq j$ we may conclude that $g$ stabilises each part of $\Sigma$; that is, $g \in N$. Moreover, the unique point contained in $A \cap \sigma_2$ is fixed by $g$, so $g=1$ since $N$ is semiregular. Thus $G_A=1$. Said otherwise, $G_{(A,\overline{A})}=1$, so $D(G)=2$. This proves part~(ii).
\end{proof}

\section{Computational methods}\label{s:computation}

We use this section to briefly discuss the computational methods that play a role in the proofs of Theorems~\ref{t:qp} and~\ref{t:sp}. For a permutation group $G$ of small degree, it is straightforward to determine whether $D(G)=2$ by carrying out a randomised computation in \textsc{Magma}. For example, in this way, we establish the following result using the Database of Transitive Groups \cite{cannonholt,hulpke}.

\begin{lemma}\label{l:computation} 
If $G$ is an imprimitive semiprimitive group of degree at most 47, then $D(G)=2$ or $G$ is $\GL(2,3)$ in its degree 8 action and $D(G)=3$. (That is, Theorems~\ref{t:qp} and~\ref{t:sp} are true for groups with degree at most 47.)
\end{lemma}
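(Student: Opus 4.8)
The plan is to reduce the claim to a finite computer search and then describe how that search is carried out and certified. The statement asserts that every imprimitive semiprimitive group of degree at most $47$ satisfies $D(G)=2$, with the single exception $\GL(2,3)$ of degree $8$, for which $D(G)=3$. Since we only need to handle groups up to degree $47$, the natural strategy is to enumerate all relevant transitive groups from the Database of Transitive Groups \cite{cannonholt,hulpke}, filter out those that fail to be semiprimitive or are primitive, and then test each surviving group directly. First I would recall that $D(G)=2$ is equivalent to $G$ having a regular orbit on the power set $2^\Omega$ (as observed in Section~\ref{s:intro}): that is, there exists a subset $\pi\subseteq\Omega$ with $G_\pi=1$.

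The first step is the enumeration. For each degree $n$ with $8 \leq n \leq 47$ (imprimitivity forces $n$ to be composite, and the smallest interesting cases have $n\geq 6$, but one loops over all composite $n$ in range), I would iterate over the transitive groups of degree $n$ in the database. For each such $G$ one checks two properties: that $G$ is imprimitive, equivalently that $G_\omega$ is \emph{not} maximal in $G$ for a point stabiliser $G_\omega$ (equivalently $G$ admits a non-trivial invariant partition, detected via \texttt{MinimalPartitions} or \texttt{Blocks} in \textsc{Magma}); and that $G$ is semiprimitive, which by definition means every normal subgroup is transitive or semiregular. The semiprimitivity test is a routine loop over (conjugacy class representatives of) normal subgroups $N\trianglelefteq G$, checking for each that either $N$ is transitive or that every non-identity element of $N$ is fixed-point-free, i.e. $N$ acts semiregularly.

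The second step is the distinguishing-number test itself. For each group $G$ surviving the filter, I would run a randomised search for a regular orbit on subsets: sample random subsets $\pi\subseteq\Omega$ and compute the setwise stabiliser $G_\pi$, declaring $D(G)=2$ as soon as some $\pi$ with $G_\pi=1$ is found. Because the total number of subsets is $2^n$ and a random subset is very likely to have trivial stabiliser when a regular orbit exists, this randomised routine terminates quickly in the affirmative cases. For the lone exceptional group $\GL(2,3)$ of degree $8$, no such subset exists, so the randomised search will not succeed; here one instead verifies \emph{exhaustively} that $G_\pi\neq 1$ for every $\pi\subseteq\Omega$ (a search over all $2^8=256$ subsets, or using the orbit-counting structure of $G$ on $2^\Omega$), which certifies $D(G)>2$, and then exhibits a partition into three parts with trivial global stabiliser to conclude $D(G)=3$.

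The main obstacle is not mathematical depth but rather ensuring the search is both \emph{complete} and \emph{correctly certified}. The completeness concern is that the Database of Transitive Groups must genuinely contain every transitive group in the relevant degree range, so that no imprimitive semiprimitive group is overlooked; this is guaranteed by \cite{cannonholt,hulpke}. The certification concern is the asymmetry between positive and negative results: a \emph{successful} randomised search produces an explicit witness subset $\pi$ with $G_\pi=1$, which is a rigorous proof that $D(G)=2$ and requires no further justification, whereas concluding $D(G)>2$ for the exceptional case cannot rest on a randomised search failing and genuinely requires the exhaustive check over all subsets described above. Thus the bulk of the work is handled by the affirmative witnesses, and the only case demanding a deterministic exhaustive computation is $\GL(2,3)$, for which we separately confirm both $D(G)\neq 2$ and $D(G)=3$.
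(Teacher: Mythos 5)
Your proposal is correct and takes essentially the same route as the paper, which proves this lemma precisely by enumerating the relevant groups from the Database of Transitive Groups and running a randomised \textsc{Magma} search for a subset with trivial setwise stabiliser. Your extra care in noting that the exceptional case $\GL(2,3)$ requires an exhaustive (deterministic) check to certify $D(G)>2$ is sound and simply makes explicit what the paper leaves implicit.
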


In Tables~\ref{tab:affP} and~\ref{tab:asP} we list the groups in $\mathcal{P}$ from Theorem~\ref{t:seress} (that is, the primitive groups with distinguishing number greater than 2 that do not contain the alternating group), together with the ID of each group in the current database of primitive groups \cite{colva} in \textsc{Magma} \cite{magma}. If a group $X$ has ID $[n,i]$, then $X$ is the $i$th group of degree $n$ in this database; that is, the command \texttt{PrimitiveGroup(n,i)} in \textsc{Magma} returns $X$. (We have listed the IDs because this database is different to the one used in \cite{seress}.)

\begin{table} 
\centering
\caption{The affine groups $X \leqslant \sym(n)$ in the set $\mathcal{P}$ } \label{tab:affP}
\begin{center} Groups with $D(X)=3$ \end{center}
\begin{tabular}{c|ccccc}
\toprule[1pt]   
ID  & $[5,2]$       & $[5,3]$        & $[7,4]$        & $[8,2]$            & $[9,4]$       \\
$X$	& $D_{10}$      & $F_{20}$       & $F_{21}$       & $2^3.F_{21}$       & $3^2.D_8$     \\
\hline
    & $[9,5]$       & $[9,6]$        & $[9,7]$        & $[16,16]$          & $[16,17]$     \\
    & $3^2.8.2$	    & $3^2.\SL(2,3)$ & $3^2.\GL(2,3)$ & $2^4.\GammaL(2,4)$ & $2^4.\alt(6)$ \\ 
\hline
    & $[16,18]$     & $[16,19]$      &  $[16,20]$     & $[32,3]$                           \\
    & $2^4.\sym(6)$ & $2^4.\alt(7)$  & $2^4.\alt(8)$  & $2^5.\GL(5,2)$                     \\
\bottomrule[1pt]
\end{tabular}
\bigskip
\begin{center} Groups with $D(X)=4$ \end{center}
\begin{tabular}{c|c}
\toprule[1pt]
ID  & $[8,3]$        \\
$X$	& $2^3.\GL(3,2)$ \\
\bottomrule[1pt]
\end{tabular}
\end{table}

\begin{table}
\centering
\caption{The almost simple groups $X \leqslant \sym(n)$ in the set $\mathcal{P}$} \label{tab:asP}
\begin{center} Groups with $D(X)=3$ \end{center}
\begin{tabular}{c|ccccc}
\toprule[1pt]
ID  & $[6,1]$           & $[8,4]$             &  $[8,5]$          & $[9,8]$           & $[9,9]$         \\
$X$	& $\PSL(2,5)$       & $\PSL(2,7)$         & $\PGL(2,7)$       & $\PSL(2,8)$       & $\PGammaL(2,8)$ \\
\hline 
    & $[10,2]$          & $[10,3]$            & $[10,4]$          & $[10,5]$          & $[10,6]$        \\
    & $\sym(5)$         & $\alt(6)$           & $\sym(6)$         & $\alt(6).2_2$     & $\alt(6).2_3$   \\
\hline
    & $[10,7]$          & $[11,5]$            & $[12,2]$          & $[12,3]$          & $[13,7]$        \\
    & $\alt(6).2^2$     & $\PSL(2,11)$        & $\PGL(2,11)$      & $\mathrm{M}_{11}$ & $\PSL(3,3)$     \\ 
\hline  
    & $[14,2]$          & $[15,4]$            & $[17,7]$          & $[17,8]$          & $[21,7]$        \\
    & $\PGL(2,13)$      & $\alt(8)$           & $\PSL(2,16).2$    & $\PGammaL(2,16)$  & $\PGammaL(3,4)$ \\ 
\hline 
    & $[22,1]$          & $[22,2]$            & $[23,5]$          & $[24,3]$                            \\
    & $\mathrm{M}_{22}$ & $\mathrm{M}_{22}.2$ & $\mathrm{M}_{23}$ & $\mathrm{M}_{24}$                   \\
\bottomrule[1pt]
\end{tabular}
\bigskip
\begin{center} Groups with $D(X)=4$.\end{center}
\begin{tabular}{c|cccc}
\toprule[1pt]
ID  & $[6,2]$     & $[7,5]$     & $[11,6]$          & $[12,4]$          \\
$X$	& $\PGL(2,5)$ & $\PSL(3,2)$ & $\mathrm{M}_{11}$ & $\mathrm{M}_{12}$ \\
\bottomrule[1pt]
\end{tabular}
\end{table}

\section{Proofs of the main results}\label{s:proof}
In this final section we prove Theorems~\ref{t:qp} and~\ref{t:sp}, together with Corollary~\ref{c:graph}.

\begin{proof}[Proof of Theorem \ref{t:qp}]
Let $G \leq \sym(\Omega)$ be quasiprimitive but not primitive. Since $G$ is not primitive, we may fix a maximal $G$-invariant partition $\Sigma$ of $\Omega$, and since $G$ is quasiprimitive, $G$ acts faithfully and primitively on $\Sigma$. Therefore, Theorem~\ref{t:seress} ensures that one of the following holds: $D(G^\Sigma) = 2$; $G^\Sigma \in \{\alt(\Sigma), \sym(\Sigma)\}$ or $G^\Sigma \in \mathcal{P}$.

First assume that $D(G^\Sigma) = 2$. Since $G$ is quasiprimitive, Lemma~\ref{l:reduction}(i) implies that $D(G) \leq D(G^\Sigma)=2$, which proves Theorem~\ref{t:qp} in this case.

For the remaining cases, we will assume that $D(G^\Sigma) > 2$. Let us fix a point $\omega \in \Omega$ and the part $\sigma \in \Sigma$ such that $\omega \in \sigma$. Write $n=|\Sigma|$, $H= G_\omega$ and $M = (G^\Sigma)_\sigma$. Note that $M$ is a maximal subgroup of $G$ since $\Sigma$ is a maximal $G$-invariant partition of $\Omega$, and note that $H < M$ since $G$ is not primitive. If $|M:H| \geq n-1$, then $|\sigma| \geq |\Sigma|-1$, so $D(G)=2$, by Lemma~\ref{l:reduction}(ii). Now we assume that $|M:H| < n-1$.

Assume that $G^\Sigma$ is $\alt(\Sigma)$ or $\sym(\Sigma)$. Therefore, $M \cong \alt(n-1)$ with $n\geq 4$ (if $G^\Sigma=\alt(\Sigma)$) or $M \cong \sym(n-1)$ with $n\geq 3$ (if $G^\Sigma = \sym(\Sigma)$).

Suppose that $n=3$. Then $G \cong \sym(3)$, $M \cong \sym(2)$ and $H=1$. However, $\alt(3)$ is a normal subgroup of $G$ which is intransitive on $\Omega$, which contradicts the fact that $G$ acts quasiprimitively on $\Omega$. Now suppose that $n=4$, so $G \cong \sym(4)$ or $G \cong \alt(4)$, and $|\Omega|\geq 8$. Since $V_4$ is a normal subgroup of $G$ which has order $4$, it is intransitive on $\Omega$, contradicting the fact that $G$ acts quasiprimitively on $\Omega$. Therefore, $n \geq 5$.

Suppose first that $G\cong \sym(n)$, $M \cong \sym(n-1)$ and $H \cong \alt(n-1)$. It follows that $H$ is contained in $\alt(n)$, so that $\alt(n)$ is an intransitive normal subgroup of $G$, a contradiction.

Suppose that $n \geq 6$. Then every proper subgroup of $\alt(n-1)$ has index at least $n-1$, and the only proper subgroup of $\sym(n-1)$ with index strictly less than $n-1$ is $\alt(n-1)$. Therefore, $G\cong \sym(n)$, $M \cong \sym(n-1)$ and $H \cong \alt(n-1)$; however, we have shown that this is not the case. We conclude that $n=5$.

If $G\cong \sym(5)$ and $M \cong \sym(4)$, then because of the cases ruled out above, we must have $H \cong D_8$. Similarly, if $G\cong \alt(5)$ and $M \cong \alt(4)$, then $H\cong V_4$. In both of these cases, the action of $G$ on the cosets of $H$ is isomorphic to the action of $G$ on the partitions of $\{1,2,3,4,5\}$ with parts of size $1,2,2$. It is straightforward to verify that there is a subset of size 3 (for example, $\{ 1|23|45, \, 1|24|35, \, 2|13|45 \}$ in terms of partitions) whose stabiliser in $G$ is trivial. Therefore, $D(G) = 2$, which proves Theorem~\ref{t:qp} in this case.

Finally, assume that $G^\Sigma \in \mathcal{P}$. Suppose first that $G^\Sigma$ is affine. Then there is an elementary abelian normal subgroup $V \leqn G^\Sigma$ of order $|\Sigma|=n$.  Since $G$ and $G^\Sigma$ are isomorphic as abstract groups, we have $V \leqn G$. Note that $|\Omega|=|\sigma|n\geq 2n$, so $V$ acts intransitively on $\Omega$, a contradiction to $G$ being quasiprimitive. Therefore, $G^\Sigma$ is not affine. 

It remains to assume that $G^\Sigma$ is almost simple, so $X=G^\Sigma$ appears in Table~\ref{tab:asP}. Recall that $M$ is a point stabiliser in the degree $n$ action of $G$ on $\Sigma$ and that we assumed that \mbox{$|M:H|<n-1$}. The subgroup $H$ is contained in at least one maximal subgroup $K$ of $M$, which must satisfy  $|M:K| < n-1$. For $(G,n)\in \{(\mathrm{M}_{11},12)$, $(\mathrm{M}_{12},12)$, $(\mathrm{M}_{22},22)$, $(\mathrm{M}_{23},23)$, $(\mathrm{M}_{24},24)\}$, every maximal subgroup $K$ of $M$ satisfies $|M:K|\geq n-1$, a contradiction. Assume $(G,n)\in\{(\sym(5),6),(\alt(6).2_2,10) ,(\mathrm{M}_{22}.2,22)\}$. Then the only maximal subgroups $K$ of $M$ satisfying $|M:K| < n-1$ also satisfy   $K\leq \mathrm{soc}(G)$. Hence $H\leq \mathrm{soc}(G)$ and the proper normal subgroup $\mathrm{soc}(G)$ of $G$ cannot be transitive, a contradiction to  $G$ being quasiprimitive.

From now on, we assume $G$ is one of the other groups in the list $\mathcal{P}$. Let $K$ be a maximal subgroup of $M$ containing $H$. By Lemma~\ref{l:reduction}(i), $D(G) \leq D(G^{\Sigma_K})$. Therefore, to prove that $D(G)=2$, it suffices to prove that $D(G^{\Sigma_K})=2$ for each maximal subgroup $K$ of $M$ with $|M:K| < n-1$. This is a straightforward computation in \textsc{Magma}. (Computer verification would be more difficult for some of the groups covered in the previous paragraphs and it is for this reason that we treat those groups separately.) This completes the proof.
\end{proof}

We now turn to semiprimitive groups. Before proving Theorem~\ref{t:sp}, we handle one case of the proof.
\begin{lemma}\label{l:sp_case}
Let $G \leq \sym(\Omega)$ be semiprimitive, let $N \leqn G$ be an intransitive normal subgroup of $G$ of prime order and let $\Sigma$ be the set of $N$-orbits. Assume that $G^\Sigma$ is an almost simple group in $\mathcal{P}$. Then $D(G)=2$.
\end{lemma}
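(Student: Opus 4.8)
The plan is to exhibit a subset $A \subseteq \Omega$ whose setwise stabiliser $G_A$ is trivial, which is equivalent to $D(G)=2$. The idea is to encode a distinguishing colouring of $\Sigma$ into the intersection sizes $|A \cap \sigma|$ as $\sigma$ ranges over $\Sigma$. First I would record the structure. As $N$ is an intransitive normal subgroup of the semiprimitive group $G$, it is semiregular, and since $|N|=p$ is prime every $N$-orbit has size $p$; hence each part $\sigma \in \Sigma$ has size $p$ and $N$ acts regularly on $\sigma$. By Lemma~\ref{l:sp}(i) the kernel of the action of $G$ on $\Sigma$ is $N$, so $G^\Sigma \cong G/N$, and since $G^\Sigma$ is almost simple and lies in $\mathcal{P}$ we have $d:=D(G^\Sigma) \in \{3,4\}$ by Table~\ref{tab:asP}.

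The principal case is $p \geq d-1$. Here I would fix a distinguishing colouring $c \colon \Sigma \to \{0,1,\dots,d-1\}$ for $G^\Sigma$ (one exists by the definition of $d$), and construct $A$ by selecting inside each part $\sigma$ an arbitrary subset of size exactly $c(\sigma)$; this is possible because $c(\sigma) \leq d-1 \leq p = |\sigma|$. The verification then splits into two steps. Given $g \in G_A$, the induced permutation $\bar g \in G^\Sigma$ sends $A \cap \sigma$ to $A \cap \sigma^{\bar g}$, so $|A \cap \sigma| = |A \cap \sigma^{\bar g}|$ and $\bar g$ preserves $c$; as $c$ is distinguishing, $\bar g = 1$ and hence $g \in N$. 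To force $g = 1$ I would exploit an \emph{intermediate} part: since $d \geq 3$ the colour $1$ is used, so some part $\sigma$ has $0 < |A \cap \sigma| < p$. A non-trivial element of $N$ generates $N$ (as $p$ is prime) and so acts on $\sigma$ as a single $p$-cycle, whose only invariant subsets are $\emptyset$ and $\sigma$; thus $g$ fixing $A \cap \sigma$ forces $g=1$. Therefore $G_A = 1$ and $D(G)=2$.

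It remains to handle $p < d-1$, which, as $d \leq 4$, occurs only for $p=2$ and $d=4$. In this case I would appeal directly to the computation: the almost simple members of $\mathcal{P}$ with $D=4$ have degrees $n \in \{6,7,11,12\}$, so $|\Omega| = pn = 2n \leq 24 \leq 47$, and Lemma~\ref{l:computation} gives $D(G)=2$.

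The main obstacle is exactly this residual case: when $p$ is too small to realise $d$ distinct intersection sizes, the size-encoding offers too few colours and the reduction to $G^\Sigma$ fails. What rescues the argument is that this can only happen for the four groups of degree at most $12$ above, keeping $|\Omega|$ within computational range. A secondary point to get right is to ensure the colouring genuinely uses an intermediate size, so that the regular $N$-action within a part can be used to trivialise the kernel; adopting the labelling $c(\sigma) \in \{0,\dots,d-1\}$ with $d \geq 3$ secures this automatically.
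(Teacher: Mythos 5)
Your proof is correct, but it takes a genuinely different and considerably more elementary route than the paper's. The paper first disposes of $|N| \geq |\Sigma|-1$ via Lemma~\ref{l:reduction}(ii), then runs a structural analysis: setting $H = G^\infty$, it shows $HN$ is transitive on $\Omega$, splits according to $N \cap H = 1$ or $N \leq H$, eliminates the possibility that $H$ is semiregular via a cyclic-stabiliser contradiction inside $\mathrm{AGL}(1,p)$, uses Schur multipliers to pin down the perfect central extensions $(\SL(2,7),16)$, $(\nonsplit{2}{\mathrm{M}_{12}},24)$, $(\SL(3,4),63)$, and in all surviving configurations verifies $D(N_{\sym(|\Omega|)}(H))=2$ by fresh \textsc{Magma} computations. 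You instead build a regular set directly, encoding a minimal distinguishing $d$-colouring of $\Sigma$ into the intersection sizes $|A\cap\sigma| \in \{0,\dots,d-1\}$; the steps all check out: Lemma~\ref{l:sp}(i) identifies the kernel with $N$, so size-preservation pushes $g \in G_A$ into $N$, and minimality of $d = D(G^\Sigma)$ guarantees every colour class is non-empty (this is the one point you assert implicitly --- it is what ensures the intermediate size $1$ genuinely occurs), after which $g$ fixes the unique point of a colour-$1$ block and semiregularity forces $g=1$; indeed you do not even need the $p$-cycle observation. The construction needs $p \geq d-1$, and since $d \leq 4$ by Table~\ref{tab:asP} the only residue is $p=2$, $d=4$, where the degree is $2n \in \{12,14,22,24\} \leq 47$ (in particular not $8$, so the $\GL(2,3)$ exception cannot arise) and Lemma~\ref{l:computation} --- which is established by independent computation, so there is no circularity --- finishes. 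Your approach buys brevity, avoids all the covering-group theory, and requires no computation beyond what the paper already states; it even proves more, since the final step uses only semiregularity of $N$: for any semiprimitive $G$ with intransitive normal $N$, one gets $D(G)=2$ whenever $3 \leq D(G^\Sigma) \leq |N|+1$, a strengthening of Lemma~\ref{l:reduction} that would also simplify parts of the proof of Theorem~\ref{t:sp}. What the paper's heavier machinery offers by comparison is structural information about $G$ itself (the possible extensions $H$), but as a standalone proof of this lemma your argument is cleaner.
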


\begin{proof}
Let $\sigma \in \Sigma$ and note that $|\sigma| = |N|$ since $N$ is semiregular. In particular, $|\Omega| = |N||\Sigma|$. Write $|N|=p$. By Lemma~\ref{l:sp}(i), $G^\Sigma\cong G/N$. If $|N| \geq |\Sigma| - 1$, then $D(G)=2$, by Lemma~\ref{l:reduction}(ii). Thus, we will assume that $|N| < |\Sigma| -1$.  Consequently, for each almost simple group $G^\Sigma \in \mathcal{P}$ we can determine all of the possibilities for $|N|$. 

Let $H=G^\infty$ be the last term in the derived series of $G$. Then $T \coloneqq HN/N = (G/N)^\infty$ is the socle of $G/N$, and thus is a non-abelian simple group. Since $(HN)^\Sigma$ is a normal subgroup of $G^\Sigma$ that is not semiregular (as $|HN^\Sigma| = |T| > |\Sigma|$) and since  $G^\Sigma$ is semiprimitive by Lemma \ref{l:sp}(ii), we know that $(HN)^\Sigma$ acts transitively on $\Sigma$. This implies that $HN$ acts transitively on $\Omega$ since $N$ is transitive on each of its orbits. Since $N$ has prime order, either $N \cap H = 1$ or $N \leq H$.  

First assume that $N \cap H = 1$. Then $H \cong HN/N = T$. Suppose further that $H$ is intransitive on $\Omega$. Since $G$ is semiprimitive, $H$ is semiregular. Let $\Delta$ be the set of $H$-orbits. By Lemma~\ref{l:sp}, $G^\Delta = G/H$ acts faithfully and semiprimitively on $\Delta$. Moreover, since $HN$ is transitive on $\Omega$, $N^\Delta = (HN)^\Delta = HN/H \cong N$ is transitive on $\Delta$. Noting that $N$ has prime order, we see that $N$ acts regularly on $\Delta$. In particular, $|\Delta|=|N|$ is prime, so $G^\Delta$ is primitive with a regular normal subgroup $N^\Delta$ of prime order $p$. Therefore, $G^\Delta$ is a subgroup of $\mathrm{AGL}(1,p)$. Consequently, for a point $\delta \in \Delta$, the stabiliser $(G^\Delta)_\delta$ is cyclic. By Lemma~\ref{l:sp}(iv), $(G^\Sigma)_\sigma \cong G_\omega \cong (G^\Delta)_\delta$, so $(G^\Sigma)_\sigma$ is cyclic. However, consulting Table~\ref{tab:asP}, we see that $(G^\Sigma)_\sigma$ is non-abelian, which is a contradiction. Therefore, $H$ acts transitively on $\Omega$. 

Consequently, $|H:H_\omega| = |\Omega|=|N||\Sigma| = p|\Sigma|$. Recall that $H\cong H^\Sigma= T$  and $p$ is a prime number such that $p < |\Sigma|-1$. Therefore, in \textsc{Magma} we can construct all possible degree $p|\Sigma|$ actions of $H$. Note $G \leq N_{\sym(p|\Sigma|)}(H)$ and $G$ has a subgroup $N$ of order $p$ centralising $H$. For each possible action, it is straightforward to verify in \textsc{Magma} whether $N_{\sym(p|\Sigma|)}(H)$ has a subgroup of order $p$ centralising $H$ and, if so, to then verify that $D(N_{\sym(p|\Sigma|)}(H))=2$. Since $G \leq N_{\sym(p|\Sigma|)}(H)$, we deduce that $D(G) = 2$.

We many now assume that $N \leq H$. Since $N$ is abelian $N \leq C_H(N)$. Since $H/N=T$ is simple, either $C_H(N)=N$ or $C_H(N)=H$. Suppose that $C_H(N)=N$. Then $T = H/N = H/C_H(N)$ embeds in $\mathrm{Aut}(N)$, which is a contradiction since $\mathrm{Aut}(N)$ is abelian. Therefore, $C_H(N)=H$, so $N \leq Z(H)$. In particular, $H$ is a perfect central extension of $T$ by $N$. Consequently, $H$ is a quotient of the universal covering group of $T$, and therefore $p=|N|$ divides the order of the Schur multiplier of $T$. Above we noted that $HN=H$ is transitive on $\Omega$, so we have $|H: H_\omega| = |\Omega| = p|\Sigma|$. As a result, $H$ must have a core-free subgroup of index $p|\Sigma|$. Considering each of the simple groups $T$ which occur as a socle of a group in Table~\ref{tab:asP}, this implies that
\[
(H,|\Omega|) \in \{ (\SL(2,7), 16), (\nonsplit{2}{\mathrm{M}_{12}},24), (\SL(3,4),63) \}.
\]
These actions are available at \cite{webatlas} and we verify that $D(N_{\sym(p|\Sigma|)}(H))=2$ with \textsc{Magma} in each case. Therefore, $D(G)=2$, completing the proof.
\end{proof}

We now prove Theorem~\ref{t:sp}.
\begin{proof}[Proof of Theorem~\ref{t:sp}]
By Lemma~\ref{l:computation}, the result holds for groups with degree at most $47$. In particular, $\GL(2,3)$ in its degree 8 action (on the set $\Omega$ of non-zero vectors of $\mathbb{F}_3^2$) has distinguishing number 3 (which is witnessed by the partition $\{ \{e_1\}, \{e_2\}, \Omega \setminus \{e_1,e_2\}\}$).

Now let $G$ be a counterexample to the statement of minimal order. Therefore, $G$ is semiprimitive but not quasiprimitive, $G$ is not $\GL(2,3)$ in its degree 8 action and $D(G)>2$. As noted above, this implies that $G$ has degree at least $48$. Since $G$ is not quasiprimitive, we may fix an intransitive minimal normal subgroup $N$ of $G$ (since otherwise all normal subgroups of $G$ would be transitive). Since $N$ is a minimal normal subgroup, we may write $N=T^d$ for a (possibly abelian) simple group $T$. Let $\Sigma$ be the set of $N$-orbits, write $n=|\Sigma|$ and record that $|\sigma|=|N|$ for all $\sigma \in \Sigma$. By Lemma~\ref{l:sp}, $G^\Sigma \cong G/N$ is semiprimitive. By Lemma~\ref{l:reduction}(i),  $D(G^\Sigma) \geq D(G) > 2$, so Lemma~\ref{l:reduction}(ii) implies that $|N|<n-1$. 

Suppose that $G^\Sigma$ is not quasiprimitive. Record that $D(G^\Sigma) > 2$ and $|G^\Sigma|=|G/N| < |G|$. Since $G$ is a minimal counterexample to the statement, we must have $G^\Sigma = \GL(2,3)$ and $n=8$. Since $N=T^d$ and $|N| < 7$, we may conclude that $|N|$ is a prime power and  $|N| \in \{2,3,4,5\}$. Therefore, the degree of $G$ is $8|N| \in \{16,24,32,40\}$. This is a contradiction since we know the degree of $G$ is at least $48$. Therefore, $G^\Sigma$ is quasiprimitive. Accordingly, Theorem~\ref{t:qp} implies that $G^\Sigma$ is primitive. Moreover, by Theorem~\ref{t:seress}, either $G^\Sigma \in \{\alt(n), \sym(n)\}$ or $G^\Sigma \in \mathcal{P}$.

Suppose that $G^\Sigma \in \{\alt(n), \sym(n)\}$. Recall that $N=T^d$ for a simple group $T$. Since $1 < |N| < n-1$ and $|\Omega|=|N|n\geq 48$, it follows that $n \geq 8$. Therefore, we are in a position to apply \cite[Theorem~1.7]{kyle}, and one of the following holds: $d \geq n-2$; $\alt(n-1)$ embeds in $T$; or $n=8$ and $|N|=16$. In the first case, $|N| = |T|^d \geq 2^{n-2} > n-2$, which is a contradiction. In the second case, $|N| \geq (n-1)! > n-1$, which again is a contradiction. In the third case, $|N|=16 > 8 = n$, a final contradiction. Therefore, $G^\Sigma \in \mathcal{P}$.

Since $G^\Sigma \in \mathcal{P}$, we know that $n \leq 32$ (see Tables~\ref{tab:affP} and~\ref{tab:asP}). Since  $N=T^d$ for a simple group $T$ and $|N| < n-1 \leq 31$, it follows that $N$ is an elementary abelian group of order $p^d$ for some prime $p$. Moreover, $N$ is an irreducible (but not necessarily faithful) module for $G/N$ over $\mathbb{F}_p$. Note that each member of $\mathcal{P}$ is either affine or almost simple. We will consider these two cases separately.

First assume that $G^\Sigma$ is affine. In this case, $n=r^f$ for a prime number $r$ and a positive integer $f$, and $G^{\Sigma} = G/N$ has an elementary abelian regular normal subgroup $R/N$. In particular, $R$ is a regular normal subgroup of $G$. Now $G/C_G(N)$ acts on $N$ by conjugation and $N \leq C_G(N)$ since $N$ is abelian. Since $R/N$ is a minimal normal subgroup of $G/N$, either $R/N \cap C_G(N)/N = 1$ or $R/N \leq C_G(N)/N$. In the first case, $R/N \cong C_r^f$ embeds into $\mathrm{Aut}(N) \cong \GL(d,p)$, but since $p^d < n-1$, by considering the possibilities for $(p,d)$ for each of the groups in Table~\ref{tab:affP} it is straightforward to see that this is impossible. For example, if $R/N=C_2^4$, then $p^d=|N|<n-1=15$ so either $d=1$ or $(p,d) \in \{(2,2),(2,3),(3,2)\}$. If $d=1$, then clearly $\GL(1,p) \cong C_{p-1}$ does not have a subgroup of order $2^4 > p-1$, and it is not difficult to verify that $C_2^4$ is not a subgroup in the three cases where $d > 1$ either. Therefore, $R/N \leq C_G(N)/N$, which implies that $R \leq C_G(N)$ and hence that $N\leq Z(R)$. 

Suppose that $|R/N|$ and $|N|$ are coprime. Then, by the Schur--Zassenhaus Theorem, $N$ has a complement $M$ in $R$. In particular, $|R|=|M||N|$. Moreover, since  $N\leq Z(R)$, $M$ is normal in $R$. Therefore, $M$ is a normal Sylow $r$-subgroup of $R$ and hence is characteristic in $R$. It follows that $M$ is a normal subgroup of $G$.  Now let $\Pi$ be the set of orbits of $M$ and let $\pi \in \Pi$. Note that $|\Omega|=|\Pi||M|$ and $|\Omega|=|R|$ since $R$ is regular. Therefore, $|N|=|\Pi|$. Earlier we observed that $|N| < |\Sigma|-1 = |\Omega|/|N|-1 = |M|-1$, so $|\Pi| < |M|-1$. Now Lemma~\ref{l:reduction}(ii) implies that $D(G) = 2$, which is a contradiction. Therefore, $|R/N|$ and $|N|$ are not coprime, that is $r=p$. 

Since $p^d < r^f-1$ and $p=r$ we may conclude that $d<f$. In particular $f>1$, so, by inspecting Table~\ref{tab:affP}, $r^f \in \{8,9,16,32\}$. Since the degree of $G$ is $|R| = r^{d+f}\geq 48$ with $d < f$, we must have that $r^f \in \{16,32\}$. Now since $N$ is in the centre of $R$, $G/R \cong (G^\Sigma)_\sigma$ acts irreducibly (but not necessarily faithfully) on $N$. Since $|N| = r^d < r^f$, considering Table~\ref{tab:affP}, we must have $d=1$ (since $f$ is the smallest dimension of a non-trivial irreducible representation of $(G^\Sigma)_\sigma$ in each case). Said otherwise, $|N|=r$, so the degree of $G$ is $|R|=r^{f+1}$. Therefore, $r^{f+1} \geq 48$ and since $r^f \in \{16,32\}$ we deduce that $r^f=32$, $G^\Sigma = 2^5.\mathrm{GL}(5,2)=\mathrm{AGL}(5,2)$ and $|N|=2$. Clearly $G$ must act trivially by conjugation on $N$, and thus $G$ is a central extension of $\mathrm{AGL}(5,2)$. Now $\mathrm{AGL}(5,2)$ is perfect and a {\sc Magma} calculation shows that $\mathrm H^2(\mathrm{AGL}(5,2),C_2) = 1$. This implies that $G$ contains a normal subgroup $H \cong \mathrm{AGL}(5,2)$ such that $G = HN$ and $[H,N]=1$ (that is, $G \cong H \times N$). Therefore, $[G,G] = [HN,HN] = [H,H][N,N] = [H,H] = H$ since $H \cong \mathrm{AGL}(5,2)$ is perfect. Now Lemma~\ref{l:sp}(iv) implies that $G_\omega \cong (G^\Sigma)_\sigma \cong \mathrm{GL}(5,2)$, which is also perfect. Therefore, $G_\omega \leq [G,G] = H$. Consequently, $HG_\omega = H < G$, so $H$ is intransitive, and $1 \neq G_\omega \leq H_\omega$, so $H$ is not semiregular. This is a contradiction to $G$ being semiprimitive. Therefore, $G^\Sigma$ is not affine.

Now assume that $G^\Sigma$ is almost simple. Let $H = G^\infty$ and write $T=HN/N=(G^\infty)^\Sigma$, noting that $T$ is the socle of $G^\Sigma$ (so, in particular, is a non-abelian simple group). Since $D(G) > 2$, Lemma~\ref{l:sp_case} implies that $d>1$, where $|N|=p^d$. As $N$ is abelian, $N \leq C_G(N)$. Since $HN/N$ is the minimal normal subgroup of $G/N$, either $HN/N \cap C_G(N)/N = 1$ or $HN/N \leq C_G(N)/N$. In the first case, $T \cong HN/N$ embeds into $\mathrm{Aut}(N) \cong \GL(d,p)$, but since $p^d < n-1$, by considering the possibilities for $(p,d)$ for each of the groups in Table~\ref{tab:asP} we see that this is impossible. For example, if $n=21$ and $T=\PSL(3,4)$, then $p^d=|N|<n-1=20$ with $d>1$ so $(p,d) \in \{(2,2),(2,3),(2,4),(3,2)\}$. If $(d,p) \neq (2,4)$, then $|\GL(d,p)| < |\PSL(3,4)|$ and the claim is immediate, while $|\GL(4,2)|=|\PSL(3,4)|$ but $\GL(4,2) \not\cong \PSL(3,4)$ so the claim is true in this case also. Therefore, $HN/N \leq C_G(N)/N$, which implies that $H \leq C_G(N)$.

Observe that $G/HN \cong (G/N)/(HN/N) = G^\Sigma/\mathrm{soc}(G^\Sigma)$ (which is easily determined for each group $G^\Sigma$ in Table~\ref{tab:asP}). Since $H \leq C_G(N)$, $N$ is an irreducible (but not necessarily faithful) module for $G/HN$. Suppose that $G/HN$ is a $p$-group. Then $G/HN$ has a fixed point on $N$ and since $N$ is irreducible this implies that $|N|=p$, which contradicts the fact that $d > 1$. Therefore, $G/HN$ is not a $p$-group. Now $G/HN \not\in \{1, C_2, C_2^2 \}$ since all irreducible modules over a field of odd order for $1$, $C_2$ and $C_2^2$ are one-dimensional. By inspecting the possibilities in Table~\ref{tab:asP}, we see that $(T,\, n,\,  G/HN,\,  N)$ must be one of
$(\PSL(2,8),\, 9,\, C_3,\, C_2^2),$    $( \PSL(2,16),\, 17,\, C_4,\, C_3^2)$, $( \PSL(3,4),\, 21,\, \sym(3),\, C_2^2)$.
Since $|\Omega|=n|N|\geq 48$ we may discount the first of these possibilities. We now consider the remaining two options.

First assume that $T = \PSL(2,16)$ and $N=C_3^2$. Since $N \leqslant Z(HN)$, $HN$ is a central extension of $N$ by $HN/N \cong T$. Since the Schur multiplier of $T$ is trivial, $HN \cong N \times T \cong C_3^2 \times \mathrm{PSL}(2,16)$ (see \cite[Section~33]{aschbacher}, for example). Since $H=G^{\infty}$, $H=[HN,HN] \cong T$, so  $H \cap N=1$. If $H$ is intransitive, then, since $G$ is semiprimitive, $H$ is semiregular, so $|\PSL(2,16)|=|H|$ divides $|\Omega| = n|N| =17 \cdot 3^2 = 153$, which is false. Therefore, $H$ is transitive and $H_\omega$ is a subgroup of index $|\Omega| = 153$. However, $H \cong \PSL(2,16)$ has no subgroup of index $153$, which is a contradiction.

Next assume that $T =\PSL(3,4)$ and $N=C_2^2$. Again $N \leqslant Z(HN)$, so $HN$ is a central extension of $N$ by $T$. Since $N$ is a minimal normal subgroup of $G$, either $H \cap N = 1$ or $N \leq H$. If $H \cap N = 1$, then $H \cong HN/N = \PSL(3,4)$. If $N \leq H$, then $N \leq Z(H)$ and $H$ is a perfect central extension of $\PSL(3,4)$ by $C_2^2$; since the Schur multiplier of $\PSL(3,4)$ is $C_3 \times C_4^2$, there is a unique such extension, which is described explicitly in \cite[p.~245]{holtplesken} and which we can construct in \textsc{Magma}. In both cases $|H|\geq |\PSL(3,4)|=20160$. Since $H \leqn G$, $H$ is either semiregular or transitive. If $H$ is semiregular, then $|H|$ divides $|\Omega| = 21 \cdot 4 = 84$, which is absurd. If $H$ is transitive, then $H$ has a subgroup of index $84$ and for both possibilities for $H$ described above we can verify in \textsc{Magma} that this is false. This final contradiction completes the proof.
\end{proof}

We conclude by proving Corollary~\ref{c:graph}.

\begin{proof}[Proof of Corollary~\ref{c:graph}]
Let $\Omega$ be the vertex set of $\Gamma$ and write $G=\mathrm{Aut}(\Gamma)$. Assume that $|\Omega| \geq 4$; the result is clear otherwise. Suppose that $D(G) > 2$. Then Corollary~\ref{c:corollary} implies that $\alt(\Omega) \leq G$, $G \in \mathcal{P}$ or $G$ is $\GL(2,3)$ of degree 8. If $G$ is  $2$-transitive, then $\Gamma$ is  complete. Hence $\alt(\Omega) \not\leq G$ and if $G\in \mathcal P$ then $G$ is $D_{10}$ of degree $5$, $3^2.D_8$ of degree $9$, or $\sym(5)$ of degree $10$. These permutation groups have rank $3$, so are the automorphism groups of at most one graph and its complement; these are the graphs mentioned in the statement. If $G=\GL(2,3)$, then $G$ also has rank 3 and the orbitals of $G$ imply that $\Gamma$ is the complete multipartite graph with 4 parts each with two vertices. However, $\mathrm{Aut}(\Gamma)=\sym(2)\wr\sym(4) \neq G$, a contradiction. Therefore, $\Gamma$ is listed in the statement, and we verify that $D(\mathrm{Aut}(\Gamma))=3$ in each case.
\end{proof}

\end{document}